

\documentclass[12pt]{article} 

\usepackage[utf8]{inputenc} 


\usepackage{geometry} 
\geometry{a4paper} 

\usepackage{graphicx} 


\usepackage{booktabs} 
\usepackage{array} 
\usepackage{paralist} 
\usepackage{verbatim} 
\usepackage{subfig} 

\usepackage{fancyhdr} 
\pagestyle{fancy} 
\lhead{}\chead{}\rhead{}
\lfoot{}\cfoot{\thepage}\rfoot{}

\usepackage{sectsty}
\allsectionsfont{\sffamily\mdseries\upshape} 

\usepackage[nottoc,notlof,notlot]{tocbibind} 
\usepackage[titles,subfigure]{tocloft} 


\usepackage{url} 
\usepackage{amsthm, amssymb,amsmath} 
\newtheorem{theorem}{Theorem}
\newtheorem{prop}[theorem]{Proposition}



\title{The Subgraphs of Order Six of the Family of Strongly Regular Graphs with Parameters $\lambda=1$ and $\mu=2$}
\author{Reimbay Reimbayev}
\date{} 

\begin{document}
\maketitle

\begin{abstract}
Strongly regular graphs are highly symmetrical and can be described fully with just a few parameters yet the existence of many of them is still under the question. Due to this uncertainty, it is of immense interest to study their structure, in particular to obtain all the possible subgraphs of lower order. In this paper we study the family of strongly regular graphs with parameters $\lambda =1$ and $\mu =2$ and establish all their subgraphs of order six.
\end{abstract}

\section{Introduction}

The existence of $srg(99,14,1,2)$, famously known as Conway's 99-vertex graph problem \cite{Conway}, is an intriguing one. But this is just one graph from the family of strongly regular graphs for which only few known to exist, e.g. those with valencies $k=2,4$ and, surprisingly 22 \cite{Berlekamp}. For that reason it is of interest to study the structure for the entire family of such graphs rather than just for a particular one. In this paper we have studied all possible subgraphs of order up to six and gave their numbers depending on the order $n$ (or valency $k$, which is interrelated) of such graphs given they do exist. 

As a short reminder, the graph is strongly regular if a pair of its vertices has exactly $\lambda$ common neighbors given they are adjacent, or $\mu$ common neighbors otherwise \cite{Gordon, Brouwer}. Another way of defining strongly regular graphs, perhaps more precise as it cuts away some trivial cases like complete graphs, is by using spectral graph theory by which the finite graph is strongly regular if its spectrum consists of exactly three eigenvalues, one of which is $k$ with multiplicity one \cite{BrouwerMaldeghem}.  

There have been some extensive studies by Makhnev et. al. on the structure of the family of strongly regular graphs with parameters $\lambda =1$ and $\mu =2$ with regard to their automorphism groups \cite{MakhnevMinkova}. Also, Makhnev was able to partially answer to the question of the existence of the graph $srg(99,14,1,2)$ in his earlier work \cite{Makhnev}. Using Wilbrink and Brouwer's lemma \cite{WilbrinkBrouwer}, Lou and Murin were able to establish a forbidden subgraph of order 9 in cade when $k=14$ \cite{LouMurin}. This fact should hold true for any $k$, which needs a strict proof of course. 

In our previous work we have shown that the existance of this graph depends also on number of hexagons it contain \cite{ReiLowerBound}, for which we have set a lower bound. It worth to note that the number of subgraphs of order up to five are all uniquely defined. And only starting with subgraphs of order six the problem of their quantification begins. In this paper we have resolved this problem for six vertex subgraphs setting one of the values, namely $n_3$, as a free variable.

We will divide the paper into subsections in the following matter. First we will bring all the possible six-vertex subgraphs with derivations of their possible numbers. Then the formulas will be summarized at the end of the subsection. One who wishes to skip all the derivations can fast forward to the end of the next subsection.

Also, in order to find some of the values for six-vertex subgraphs it is necessary to know the values for five-vertex subgraphs upfront, which are in our case are all exact and depend only on $n$ and $k$. We will omit the derivations of those values, giving just formulas further, as they can be obtain in a similar but much easier way. Together with five-vertex subgraphs, we will also give values for all four-vertex subgraphs also without derivations.

Finally, we need the next proposition. 

\begin{prop}
Consider $p_i$ for $i=3,4,5$ is the number of triangles, quadrilaterals and pentagons respectively in a strongly regular graph $srg(n,k,1,2)$. Then,
\begin{align*}
p_3=&\frac{1}{6}nk,\\
p_4=&\frac{1}{8}nk(k-2),\\
p_5=&\frac{1}{5}nk(k-2)(k-4).
\end{align*}
\end{prop}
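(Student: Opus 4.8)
The plan is to count each type of subgraph by a standard double-counting argument, using the defining property of a strongly regular graph $srg(n,k,1,2)$: every vertex has degree $k$, adjacent vertices share exactly $\lambda=1$ common neighbor, and non-adjacent vertices share exactly $\mu=2$ common neighbors. Throughout I would count ordered configurations (walks or paths with a distinguished starting structure) and then divide by the symmetry factor of the target subgraph to recover the number of unlabeled copies. The key numerical inputs are the local counts forced by the parameters: each vertex lies in exactly $k$ edges, each edge lies in exactly $\lambda=1$ triangle, and each path of length two whose endpoints are non-adjacent closes into exactly $\mu=2$ common neighbors.

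\medskip

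For $p_3$ I would count ordered triangles. Since every edge is contained in exactly $\lambda=1$ triangle, and the number of edges is $\tfrac12 nk$, each triangle is counted once per edge; more directly, counting ordered triples $(u,v,w)$ forming a triangle gives $nk\cdot 1$ (pick $u$ in $n$ ways, a neighbor $v$ in $k$ ways, and the unique common neighbor $w$), and dividing by the $6$ orderings of a triangle yields $p_3=\tfrac16 nk$. For $p_4$ (counting $4$-cycles, the ``quadrilaterals'') I would count ordered pairs of non-adjacent vertices together with their common neighbors: each non-adjacent pair contributes $\binom{\mu}{2}=\binom{2}{2}=1$ four-cycle through it as a diagonal pair. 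Equivalently I would count paths $u$--$v$--$w$ of length two and use the $\mu$-condition to close them; the factor $(k-2)$ should arise because, having fixed an edge and one triangle-apex, the remaining neighbors available to extend to a $4$-cycle number $k-2$. Matching this against the automorphism count of the quadrilateral gives the stated $p_4=\tfrac18 nk(k-2)$.

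\medskip

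For $p_5$ the same philosophy applies: count ordered $5$-paths (or partial walks) $v_1 v_2 v_3 v_4 v_5$ and close them using adjacency and the $\lambda,\mu$ constraints, tracking how many choices remain at each step. Starting from a vertex ($n$ choices) and a neighbor ($k$ choices), each successive vertex is constrained to avoid already-used neighbors and to respect the unique-common-neighbor condition, producing the successive factors $(k-2)$ and $(k-4)$ as forbidden neighbors accumulate. Dividing the ordered count by the number of automorphisms of the pentagon (the dihedral group of order $10$) should produce the factor $\tfrac15$ in $p_5=\tfrac15 nk(k-2)(k-4)$.

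\medskip

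I expect the main obstacle to be the bookkeeping in the $p_5$ case: correctly accounting for which neighbors are excluded at each extension step and ensuring that no configuration closes prematurely into a shorter cycle or repeats a vertex. In particular, one must verify that the common-neighbor conditions do not force unwanted coincidences among $v_1,\dots,v_5$, so that the accumulated factors $(k-2)$ and $(k-4)$ are exactly right rather than merely heuristic. The triangle and quadrilateral counts are essentially immediate from the parameters; the pentagon count is where the careful inclusion-exclusion over already-used and forbidden vertices genuinely matters, and I would treat that step with the most care.
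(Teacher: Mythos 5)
Your counts for $p_3$ and $p_4$ are essentially correct, and here the paper itself offers no internal proof to compare against (it cites the author's earlier work \cite{ReiLowerBound}; the only in-paper pentagon argument is Proposition 2, which presupposes the formula for $p_5$). For $p_3$ your argument is complete. For $p_4$, your second route (close an induced path $u$--$v$--$w$ using $\mu=2$, divide by the $8$ orderings) works, and the factor $k-2$ is justified because $\lambda=1$ forces the neighborhood of each vertex to induce a perfect matching, so exactly one neighbor of $v$ besides $u$ is adjacent to $u$; you should also note that the diagonals of any $4$-cycle are non-adjacent (two adjacent diagonal vertices would have two common neighbors, violating $\lambda=1$), so every quadrilateral is reached by this construction. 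Your first route for $p_4$, via non-adjacent pairs, gives $\tfrac14 n(n-1-k)$ and silently needs the parameter identity $k(k-\lambda-1)=(n-k-1)\mu$, i.e.\ $k(k-2)=2(n-k-1)$, to turn this into $\tfrac18 nk(k-2)$; you never invoke that identity.

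The genuine gap is in $p_5$, and it is exactly at the step you flagged but did not resolve. First, $p_5$ must count \emph{chordless} pentagons: in $srg(9,4,1,2)$ (the $3\times 3$ rook's graph) there are many $5$-cycles with chords but no induced ones, matching $\tfrac15 nk(k-2)(k-4)=0$, so any walk- or cycle-count that admits chords gives the wrong answer. Second, the "successive factors $(k-2)$, $(k-4)$" do not materialize. Extending an induced path $v_1v_2v_3$ by $v_4$, the correct factor is $k-3$, not $k-4$: from $N(v_3)$ one excludes $v_2$, the unique common neighbor of $v_2,v_3$, and the second common neighbor of $v_1,v_3$, and the conditions $\lambda=1$, $\mu=2$ force these three vertices to be distinct. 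Worse, the closing step is provably non-uniform: the number of ordered induced pentagons is $10p_5=2nk(k-2)(k-4)$, while the number of ordered induced $4$-vertex paths is $nk(k-2)(k-3)$, so the average number of valid closing vertices per such path is $2(k-4)/(k-3)$, which is not an integer for the relevant (even) $k\ge 6$; hence no fixed per-step factor can exist and "multiply the choices, divide by $10$" cannot be made rigorous as stated. The missing idea is a global lemma in the spirit of the paper's Proposition 2, proved self-containedly rather than from the $p_5$ formula: every induced path $v_1v_0v_2$ lies in exactly $2(k-4)$ induced pentagons, because among the $2(k-3)$ candidate closures exactly two are always bad --- one passing through the second common neighbor of $v_1,v_2$ and one through the triangle apex on the edge $v_0v_2$ --- where "exactly two, always" itself requires $\lambda/\mu$ arguments ruling out coincidences among these special vertices. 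Granting that lemma, $5p_5=\tfrac12 nk(k-2)\cdot 2(k-4)$ gives the claim; without it, your $p_5$ derivation remains a heuristic.
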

\begin{proof} The proof has given in \cite{ReiLowerBound}. \end{proof}

\section{Derivations of the Formulas}

A strongly regular graphs of sufficiently high order with parameters $\lambda=1$ and $\mu=2$, for simplicity henceforth call it a graph $G$, have exactly 62 possible subgraphs of order six (Figure \ref{mainFigure}). Note that for the graphs of smaller order not all the subgraphs would exist. The subgraphs were obtained through extensive search of all six-vertex graphs that do not break strong-regularity condition for the main graph. Let us denote $N_i$ the graphs of type $i$ from the Figure \ref{mainFigure} and by $n_i$ the number of such graphs in $G$. 

\begin{figure}
	\includegraphics[width=1.0\textwidth]{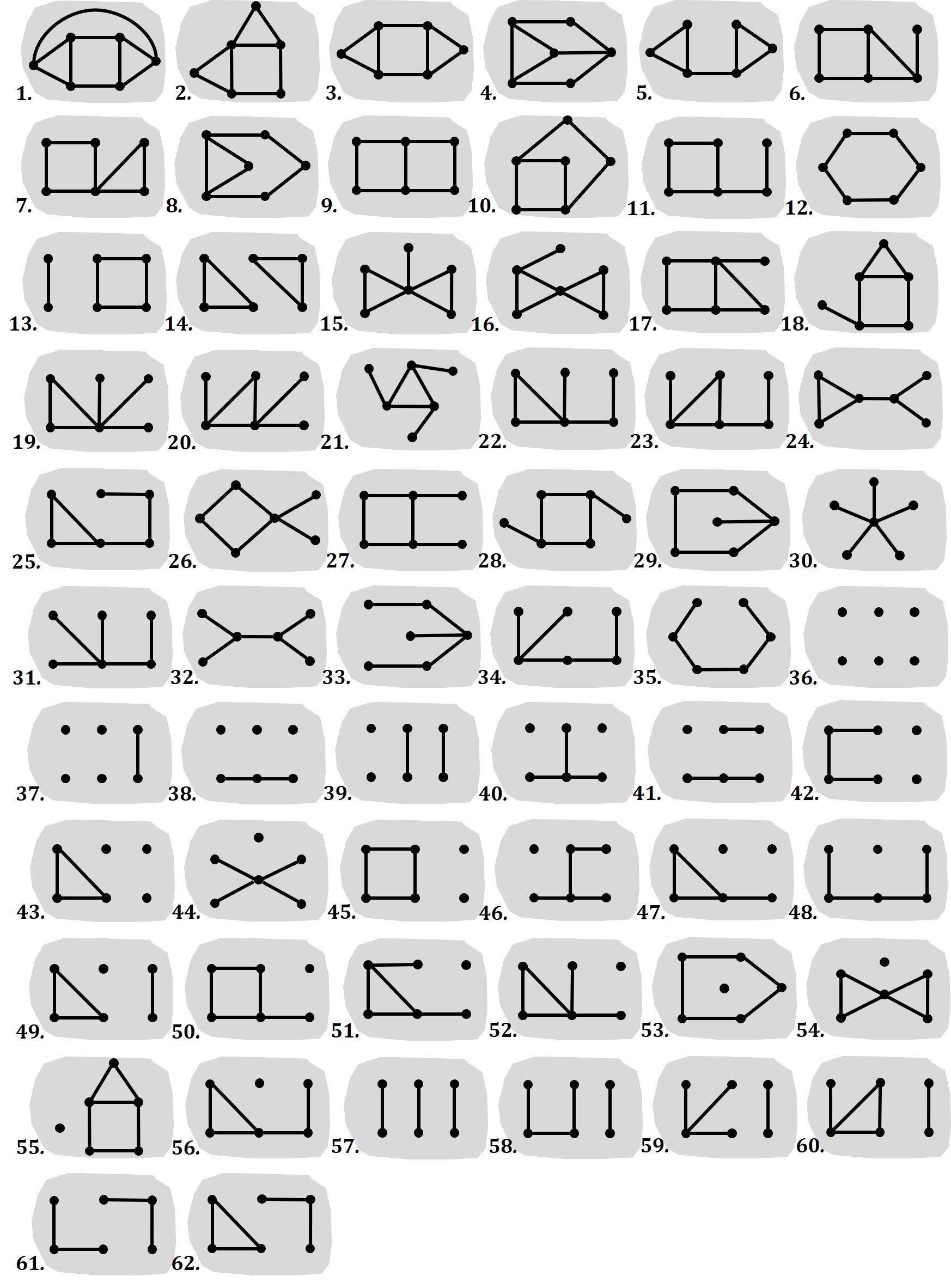}
		\centering
		\caption{All possible subgraphs of order six in $srg(n,k,1,2)$.}
		\label{mainFigure}
\end{figure}

As the exact numbers for six-vertex subgraphs are not known, compared to five-vertex ones, for which it is easy to obtain the exact formulas, we will have to use $n_3$ as a free variable compared to which all other values will be calculated.

For $n_1$, given a quadrilateral, recover triangles on its opposite sides. So we get the relation
\[2p_4=3n_1+n_3.\]
Thus, \[n_1=\frac{1}{12}nk(k-2)-\frac{1}{3}n_3.\]
Notice, that we have left $n_3$ as an unknown parameter through which we will define all other values as noted above.

For $n_2$, given a quadrilateral, choose this time triangles on adjacent sides, \[n_2=4p_4=\frac{1}{2}nk(k-2). \] 

For $n_3$, $n_3=n_3$.

For $n_4$, given a triangle $v_0v_1v_2$, there exist exactly $3(k-2)$ distinct vertices of $G$, each adjacent to exactly one of the vertices of the triangle $v_0v_1v_2$. Choose one, $w_0$, assume it is adjacent to $v_0$. The next two vertices $w_1$ and $w_2$ are predetermined as common neighbors of $w_0$ and $v_1$ and $w_0$ and $v_2$ respectively. Now, $v_0v_1v_2w_0w_1w_2$ can give us a subgraph isomorphic to either $N_1$ or $N_4$. Thus, by this construction we will get: \[3(k-2)p_3=6n_1+n_4.\]
From where, \[n_4=\frac{1}{2}nk(k-2)-6n_1=2n_3.\]

For $n_5$, choose an edge $v_1v_2$. Excluding a triangle based on $v_1v_2$, choose one of the $\frac{k}{2}-1$ incident triangles for each vertex $v_1$ and $v_2$. We obtain: \[|E(G)|(\frac{k}{2}-1)^2=n_5+2n_3+3n_1.\]
And after algebraic operations,
\[n_5=\frac{1}{8}nk(k-2)(k-4)-n_3.\]

For $n_6$, choose a quadrilateral, recover a triangle based on one of its sides. There are exactly $k-2$ vertices adjacent to the third recovered vertex of the triangle. Thus,
\[4(k-2)p_4=n_6+2n_4+6n_1.\]
Plugging the values,
\[n_6=\frac{1}{2}nk(k-2)(k-3)-2n_3.\]

For $n_7$, choose a quadrilateral, at one of its vertices recover one of the $\frac{k}{2}-2$ triangles that are not based on the sides of the quadrilateral. Notice that there cannot be any new edges between the quadrilateral and the triangle. We have \[4p_4(\frac{k}{2}-2)=n_7.\] So, \[n_7=\frac{1}{4}nk(k-2)(k-4).\]

For $n_8$, choose a pentagon. Recover on one of its sides a triangle. Thus, \[5p_5=n_8+n_4.\] And \[n_8=nk(k-2)(k-4)-2n_3.\]

For $n_9$, choose an edge. Due to topology of the graph, there exactly $k-2$ quadrilaterals based on that edge. Choose two. Thus, \[|E(G)|{k-2\choose 2} =n_9+n_4+3n_1.\] After some algebra, \[n_9=\frac{1}{4}nk(k-2)(k-4)-n_3.\]

For $n_{10}$, choose a pentagon in $p_5$ ways. Now we have exactly five pairs of vertices at distance two from each other on that pentagon. To each such pair, there is a unique vertex to recover a quadrilateral on the given pair of vertices. We have \[5p_5=2n_{10}+2n_4.\]
From where, \[n_{10}=\frac{1}{2}nk(k-2)(k-4)-2n_3.\]
Notice that $N_{10}$ will be counted twice by this construction due to two distinct pentagons, while $N_4$ - due to two pairs of vertices of the same pentagon.

For $n_{11}$, let us notice that the graph $N_{11}$ has only one vertex of degree three. We will use that vertex as a starting point for our construction, $n$ ways. Three adjacent to the first vertex but mutually not connected vertices can be chosen in $\frac{1}{6}k(k-2)(k-4)$ ways. Out of these three choose a pair and complete a quadrilateral. By this we have obtain the fifth vertex. To choose the last vertex of degree one (a leaf) we have $k-4$ choices.
\[n\frac{k(k-2)(k-4)}{6}3(k-4)=n_{11}+2n_{10}.\] Or \[n_{11}=\frac{1}{2}nk(k-2)(k-4)(k-6)+4n_3.\]

For $n_{12}$, it has been already found in \cite{ReiLowerBound}. Here we find it in another much easier way in two steps. First, we find the number of paths $P_5$, which we denoted earlier $m_{13}$. Choose a vertex, the middle one in $P_5$; next, two vertices adjacent to it; and finally, two leaves. We have \[n\frac{k(k-2)}{2}(k-3)^2=m_{13}+5p_5.\]
Or \[m_{13}=\frac{1}{2}nk(k-2)(k^2-8k+17).\]

Next construction: choose $P_5$ from $G$. Two leaves of $P_5$ share exactly two neighbors all distinct from other vertices of chosen $P_5$. Choosing one of these neighbors, we have \[2m_{13}=6n_{12}+2n_9+n_8+n_2.\]
Plugging all known values, \[n_{12}=\frac{1}{12}nk(k-2)(2k^2-21k+53)+n_3.\]

For $n_{13}$, choose a quadrilateral. Next, choose an edge of $G$ that is not incident to that quadrilateral. We have: \[p_4(|E(G)|-12-4(k-4))=n_{13}+n_{11}+n_{10}+2n_9+n_7+n_6+2n_4+3n_1.\]
From where \[n_{13}=\frac{1}{32}nk(k-2)(k-4)(k^2-12k+42)-n_3.\]

For $n_{14}$, consider all pairs of triangles that do not share a common vertex. Thus,
\[\frac{1}{2}p_3(p_3-1)-\frac{1}{2}n\cdot\frac{k}{2}(\frac{k}{2}-1)=n_1+n_3+n_5+n_{14}.\]
So, \[n_{14}=\frac{1}{144}nk(k-2)(k-4)(k-12)+\frac{n_3}{3}.\]

For $n_{15}$, choose a vertex. Then choose two adjacent to it triangles and a leaf. We have \[n_{15}=n{k/2 \choose 2}(k-4)=\frac{1}{8}nk(k-2)(k-4).\]

For $n_{16}$, choose a vertex, then two triangles adjacent to it. To one of the four degree-two vertices attach a leaf in $k-4$ ways. \[n_{16}=n{k/2 \choose 2}4(k-4)=\frac{1}{2}nk(k-2)(k-4).\]

For $n_{17}$, choose a quadrilateral, recover on one of its sides a triangle, at one of the vertices of degree three attach a leaf in $k-4$ ways. The graph has been uniquely built.
\[n_{17}=p_4\cdot4\cdot2(k-4)=nk(k-2)(k-4).\]

For $n_{18}$, choose a quadrilateral, recover a triangle on one of its sides. To one of the remaining vertices of the quadrilateral of degree two attach a leaf in $k-4$ ways. We obtain \[p_4\cdot 4 \cdot2(k-4)=n_{18}+2n_4.\] Thus, \[n_{18}=nk(k-2)(k-4)-4n_3.\]

For $n_{19}$, notice that it has only one vertex of degree five. We will start the construction from it. Choose a vertex, a triangle attached to it, and three leaves.
\[n_{19}=n\cdot\frac{k}{2}\cdot \frac{(k-2)(k-4)(k-6)}{6}=\frac{1}{12}nk(k-2)(k-4)(k-6).\] 

For $n_20$, choose a vertex, a triangle attached to it, and two leaves. To one of the two new vertices of the triangle, attach a leaf in $k-4$ ways. Thus,
\[n_{20}=n\cdot\frac{k}{2}\cdot \frac{(k-2)(k-4)}{2}\cdot 2(k-4)=\frac{1}{2}nk(k-2)(k-4)^2.\]

For $n_{21}$, choose a triangle , for every vertex of the triangle choose one of its other $k-2$ neighbors. We get, \[p_3 (k-3)^3=n_{21}+n_6+n_4+2n_1.\] So,\[n_{21}=\frac{1}{6}nk(k-2)(k-3)(k-4)+\frac{2}{3}n_3.\]

For $n_{22}$, once more we look at the vertex of highest degree of the subgraph. To that vertex we attach a triangle in $k/2$ ways, and add two other vertices, to one of which we need to add an extra vertex out of $k-5$ possible choices. Thus,\[n_{22}=n\cdot\frac{k}{2}\cdot \frac{(k-2)(k-4)}{2}\cdot 2(k-5)=\frac{1}{2}nk(k-2)(k-4)(k-5).\]

For $n_{23}$, choose a triangle. To two of its vertices add their neighbors, one to each, there will be $(k-2)(k-3)$ possibilities to do it. Now to one of them add a vertex that would be at distance two from all the vertices of the triangle - in $k-4$ ways.
We have, \[p_3\cdot3(k-2)(k-3)\cdot2(k-4)=n_{23}+2n_8.\]
Thus, \[n_{23}=nk(k-2)(k-4)(k-5)+4n_3.\]

For $n_{24}$ choose an edge. From one side attach a triangle to it in $k/2-1$ ways, to the other side two leaves. Thus we have,
\[|E(G)|\cdot2(\frac{k}{2}-1)\frac{(k-2)(k-4)}{2}=n_{24}+n_{18}+n_4.\]
Or \[n_{24}=\frac{1}{4}nk(k-2)(k-4)(k-6)+2n_3.\]

For $n_{25}$, we start again the construction from the vertex of highest degree of the subgraph. Attach a triangle and a leaf to it. Extend the leaf further by adding to it one of the $k-4$ neighbors that are not adjacent to any other vertices. To the last vertex we will add another its neighbor such that it is still at distance two from the original vertex as well as from the first added leaf - $(k-3)$ ways.
We have \[n\cdot \frac{k}{2}(k-2)(k-4)(k-3)=n_{25}+2n_8.\]
So, \[n_{25}=\frac{1}{2}nk(k-2)(k-4)(k-7)+4n_3.\]

For $n_{26}$, choose a quadrilateral. On one of its vertices attach two leaves. We have, \[n_{26}=p_4 \cdot 4 \cdot \frac{(k-4)(k-6)}{2} =\frac{1}{4}nk(k-2)(k-4)(k-6).\]

For $n_{27}$, choose a quadrilateral, attach two leaves to the neighboring two vertices of it. We have \[p_4\cdot 4(k-4)^2=n_{27}+2n_9.\] So, \[n_{27}=\frac{1}{2}nk(k-2)(k-4)(k-5)+2n_3.\]

For $n_{28}$, choose a quadrilateral. Attach two leaves on its opposite sides. We have \[p_4\cdot 2(k-4)^2=n_{28}+n_{10}.\]. Then \[n_{28}=\frac{1}{4}nk(k-2)(k-4)(k-6)+2n_3.\]

For $n_{29}$, choose a pentagon from $G$. To one of its vertices attach a leaf, which might turn out to be not a leaf but just make sure to not choose a common neighbor with one of its immediate neighbors. We have \[p_5\cdot 5 (k-4)=n_{29}+4n_{10}+n_4.\] And thus, \[n_{29}=nk(k-2)(k-4)(k-6)+6n_3.\]

For $n_{30}$, start from the vertex of highest degree of the subgraph, to which choose five mutually non adjacent its neighbors. Thus, \[n_{30}=n\cdot \frac{1}{5!}k(k-2)(k-4)(k-6)(k-8)=\frac{1}{120}nk(k-2)(k-4)(k-6)(k-8).\]

For $n_{31}$, start from the vertex of highest degree of the subgraph; add four leaves, to one of which add one more vertex in $k-5$ ways. Thus, \[n_{31}=n\cdot \frac{1}{4!}k(k-2)(k-4)(k-6) \cdot 4(k-5)=\frac{1}{6}nk(k-2)(k-4)(k-5)(k-6).\]

For $n_{32}$, choose an edge; to each of its vertices add two non adjacent vertices. We get \[|E(G)|\cdot (\frac{(k-2)(k-4)}{2})^2=n_{32}+n_{27}+n_9.\]
After some calculations, \[n_{32}=\frac{1}{8}nk(k-2)(k-4)(k^2-10k+26)-n_3.\]

For $n_{33}$, choose a vertex of degree three of the subgraph; as a starting point from that vertex we can rebuild the rest of the graph. We have, \[n\cdot \frac{k(k-2)(k-4)}{6}\cdot3(k-4)^2=n_{33}+n_{29}.\]
So \[n_{33}=\frac{1}{2}nk(k-2)(k-4)(k^2-10k+28)-6n_3.\]

For $n_{34}$, similarly to the previous case, we will start the construction from the vertex of degree three and proceed adding vertices. We have,
\[n\cdot \frac{k(k-2)(k-4)}{6}\cdot3(k-4)\cdot (k-3)=n_{34}+2n_{29}+2n_{10}.\]
\[n_{34}=\frac{1}{2}nk(k-2)(k-4)(k^2-11k+34)-8n_3.\]

For $n_{35}$, choose $P_5$ from $G$ in $m_{13}$ ways. Add a vertex to one of its end vertices such that the new vertex will not be connected to any of the two preceding vertices in exactly $k-3$ ways. Thus, 
\[m_{13}\cdot 2(k-3)=2n_{35}+2n_{29}+12n_{12}+2n_8.\]
And \[n_{35}=\frac{1}{2}nk(k-2)(k-4)(k^2-11k+36)-10n_3.\]

Due to not the best choice of numeration of subgrpaphs, further calculations cannot be carried out in linear fashion as the calculation of $n_{36}$ requires the knowledge of the values that come after it. Here we have to skip to $n_{44}$.

For $n_{44}$, reconstruction of the subgraph follows straight forward pattern working out from the vertex of highest degree and further. We have, \[n_{44}=n\cdot \frac{1}{4!}k(k-2)(k-4)(k-6)\cdot(n-k-1-6-4(k-5))=\frac{1}{24}nk(k-2)(k-4)(k-6)(n-5k+13).\]

For $n_{45}$, we choose a quadrilateral of $G$, then choose a pair of vertices that are not adjacent to any vertices of the quadrilateral. This pair can be mutually adjacent, thus giving us \[p_4\cdot {n-8-4(k-4)\choose 2}=n_{45}+n_{13}.\]
So, \[n_{45}=\frac{1}{64}nk(k-2)(k-4)(k-6)(k^2-8k+26)+n_3.\]

For $n_{46}$, start the construction from the vertex of degree three of the subgraph. Choose three mutually non adjacent vertices, one of which continue further. Finally, we need to choose the last vertex our of the ones located at distance two from the first vertex such that it is still non adjacent to two other leaves. Then we have \[n\cdot \frac{k(k-2)(k-4)}{6}\cdot 3(k-4)\cdot (n-k-4-3(k-4))=n_{46}+n_{34}.\]
\[n_{46}=\frac{1}{4}nk(k-2)(k-4)(k^3-14k^2+72k-140)+8n_3.\]

Once more we need to fast forward to $n_{60}$ before we find $n_{47}$.

For $n_{60}$, choose a triangle $K_3$ in $p_3$ ways. Denote $N(K_3)$ a set of vertices in $G$ at distance one from a vertex in $K_3$, and $W(K_3)$ - at distance two from all vertices of $K_3$. In order to built a bigger component of the subgraph we can add one of the vertices from $N(K_3)$ in $3(k-2)$. Now choose a vertex from $W(K_3)$ such that it is not adjacent to the previously chosen vertex. This vertex is adjacent to exactly six vertices from $N(K_3)$. Complete the construction choosing the neighboring vertex that is not one of those six. We have 
\[p_3\cdot 3(k-2)\cdot (|W(K_3)|-(k-4))\cdot (k-6)=2n_{60}+n_{25}.\]
Or, \[n_{60}=\frac{1}{8}nk(k-2)(k-4)(k^2-12k+38)-2n_3.\]

For $n_{47}$, similar to $n_{60}$, we will choose the connected component first and then two non adjacent to that component vertices. We have \[n\cdot k/2\cdot (k-2) {n-k-1 -2(k-2)-(k-4) \choose 2}=n_{47}+n_{60}.\]
Or \[n_{47}=\frac{1}{16}nk(k-2)(k-4)(k-6)(k^2-8k+22)+2n_3.\]

For $n_{48}$, first of all we will show that any $P_3$ from $G$ can be completed to a pentagon in exactly $2(k-4)$ ways.
\begin{prop}
A path $P_3$ in $G$ can be completed to exactly $2(k-4)$ pentagons in $G$.
\end{prop}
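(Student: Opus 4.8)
The plan is to fix the path $P_3 = v_1 v_2 v_3$, so $v_1 \sim v_2 \sim v_3$ and, the path being induced, $v_1 \not\sim v_3$. A completing pentagon then has the form $v_1 v_2 v_3 x y$ with two new vertices satisfying $x \sim v_3$, $y \sim v_1$, $x \sim y$, and—since a pentagon is a \emph{chordless} five--cycle—none of the chords $v_1 x$, $v_2 x$, $v_2 y$, $v_3 y$ present (the chord $v_1 v_3$ is already ruled out). Because $v_1 \not\sim v_3$, the pair $v_1, v_3$ has exactly $\mu = 2$ common neighbours: $v_2$ and a second vertex I call $u$. I also let $w_{12}$ and $w_{23}$ be the unique common neighbours of the edges $v_1 v_2$ and $v_2 v_3$ supplied by $\lambda = 1$. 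The counting will be organised around choosing $x$ first and then the admissible $y$.

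The structural input I would establish first is the non-adjacency of $u$, $w_{12}$, $w_{23}$ in pairs; this is the key step, and I expect it to be the part that must be stated most carefully. It follows from restating $\lambda = 1$ locally: each vertex of a neighbourhood $N(v)$ has exactly one neighbour \emph{inside} $N(v)$. Inside $N(v_1)$ the unique neighbour of $w_{12}$ is $v_2$, and since $u \in N(v_1)$ with $u \neq v_2$ we get $u \not\sim w_{12}$; the symmetric argument in $N(v_3)$ gives $u \not\sim w_{23}$; and inside $N(v_2)$ the unique neighbours of $w_{12}$ and $w_{23}$ are $v_1$ and $v_3$ respectively, whence $w_{12} \not\sim w_{23}$. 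These three facts are exactly what make the corrections below collapse to a constant.

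Next I would count directly. For $x$ I may take any neighbour of $v_3$ adjacent to neither $v_1$ nor $v_2$ (otherwise the chord $v_1 x$ or $v_2 x$ appears); deleting from $N(v_3)$ the common neighbours $v_2, u$ of $v_1, v_3$ and the common neighbour $w_{23}$ of $v_2, v_3$—three distinct vertices—leaves exactly $k-3$ choices, the set I call $X$. For each admissible $x$ there are exactly $\mu = 2$ common neighbours of $v_1$ and $x$ (as $x \not\sim v_1$), and such a common neighbour must be rejected precisely when it is adjacent to $v_3$—which forces it to equal $u$—or adjacent to $v_2$—which forces it to equal $w_{12}$. Hence $x$ yields $2$ admissible $y$, less one for each of $u, w_{12}$ that happens to be adjacent to $x$, and summing over $X$ gives the number of pentagons as $2(k-3) - |N(u)\cap X| - |N(w_{12})\cap X|$.

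Finally I would evaluate the two correction terms using the non-adjacencies. Since $u \sim v_3$, the set $N(u) \cap N(v_3)$ is a single vertex, and one checks it avoids $v_1, v_2$ (it cannot be $w_{23}$, as $u \not\sim w_{23}$), so it lies in $X$ and contributes $1$. Since $w_{12} \not\sim v_3$, the set $N(w_{12}) \cap N(v_3)$ has two elements, one being $v_2 \notin X$ and the other lying in $X$ (it is neither $u$ nor $w_{23}$, again by the non-adjacencies), contributing $1$. Therefore the total is $2(k-3) - 1 - 1 = 2(k-4)$. As a cross-check one can instead use $A^3 = (k-1)A - (k-2)I + (2k-2)J$, valid for $srg(n,k,1,2)$, which gives $2(k-1)$ paths of length three from $v_1$ to $v_3$; discarding the two that revisit $v_2$ and the four that close into chorded cycles—distinct by the non-adjacency lemma, one per chord type—again leaves $2(k-4)$.
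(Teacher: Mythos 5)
Your proof is correct, but it takes a genuinely different route from the paper. The paper argues by \emph{averaging plus a maximum}: since every pentagon contains exactly $5$ paths $P_3$, the average number of pentagons through a $P_3$ is $5p_5$ divided by the number of $P_3$'s, which equals $2(k-4)$ using the known value $p_5=\frac{1}{5}nk(k-2)(k-4)$; it then shows locally that no $P_3$ can lie in more than $2(k-4)$ pentagons (by exhibiting, among the $2(k-3)$ candidate completions, one forbidden path through the second common neighbour $w$ of the endpoints and one through the triangle vertex $z$ on an edge of the path), so that the average forces equality everywhere. You instead give a direct, exact local count: you classify the admissible fourth and fifth vertices, use the perfect-matching structure of each neighbourhood (from $\lambda=1$) to prove the pairwise non-adjacency of $u$, $w_{12}$, $w_{23}$, and then show each of the two correction terms $|N(u)\cap X|$ and $|N(w_{12})\cap X|$ equals exactly $1$, yielding $2(k-3)-2=2(k-4)$ outright. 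What each approach buys: the paper's proof is shorter but leans on the previously derived formula for $p_5$ (and only needs an upper bound, not an exact local analysis); yours is self-contained, never invokes $p_5$, and in fact re-derives it as a corollary (multiply $2(k-4)$ by the $\frac{1}{2}nk(k-2)$ induced paths and divide by $5$). Your $A^3=(k-1)A-(k-2)I+(2k-2)J$ cross-check is also valid. One small gloss to tighten: you assert rather than prove the pairwise \emph{distinctness} of $v_2$, $u$, $w_{12}$, $w_{23}$ (e.g.\ ``three distinct vertices'', and the tacit fact $u\not\sim v_2$); these do follow quickly from the same matching structure you invoke---if $u\sim v_2$ then the edge $uv_2$ would lie in the two triangles $uv_2v_1$ and $uv_2v_3$, contradicting $\lambda=1$, and the matching on $N(v_2)$ pairs $w_{12}$ with $v_1$ and $w_{23}$ with $v_3$, forcing $w_{12}\neq w_{23}$---so this is a routine fill, not a gap in the method.
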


\begin{proof}
In order to show that notice that each pentagon contains exactly 5 paths $P_3$. So the average number of pentagons that a given $P_3$ belongs to is $5p_5$ divided to the number of all $P_3$-s in $G$ and that is \[5\cdot \frac{1}{5}nk(k-2)(k-4)/\frac{1}{2}nk(k-2)=2(k-4).\]
That means we just need to prove that this is a maximal number of possible pentagons for a given $P_3$. Assume a path given $v_1v_0v_2$. Denote $w$ another common neighbor to $v_1$ and $v_2$ along with $v_0$, and $z$ a vertex that completes a triangle on edge $v_0v_2$. Now there are exactly $k-3$ neighbors of $v_1$ to which we can continue our path without being connected to $v_0$ or $v_2$. From those $k-3$ vertices there are at most $2(k-3)-2 =2(k-4)$ paths of length two to the vertex $v_2$ that can complete to a pentagon. We subtract two as we have exactly one path that goes through vertex $w$ and one through $z$. The statement is proven.
\end{proof}
Now we proceed with a construction of subgraphs $N_{48}$. Choose a middle vertex of $P_5$ and recover the entire paths, by doing that we will use the previous proposition. The last, possibly isolated, vertex we will choose such that it can only be adjacent to end points of the constructed path in $n-3k+4$ ways. Thus,
\[n\cdot \frac{k(k-2)}{2}\cdot((k-3)^2)-2(k-4)\cdot(n-3k+4)=n_{48}+2n_{35}+6n_{12}.\]
And \[n_{48}=\frac{1}{4}nk(k-2)(k-4)(k^3-14k^2+75k-160)+14n_3.\]

For $n_{62}$, choose a triangle. Similarly, denote $W$ the set of all vertices of $G$ that are at distance two from the three vertices of the triangle. Notice that $G[W]$ is regular of degree $k-6$ and $|W|=n-3-3(k-2)$. Now choose a vertex from $W$ and two its neighbors also from $W$. We have \[p_3\cdot |W| {k-6 \choose 2}=n_{62}+6n_{14}.\]
Or \[n_{62}=\frac{1}{24}nk(k-2)(k-4)(k^2-14k+54)-2n_3.\]

For $n_{49}$, we will make similar construction like for $n_{62}$ starting with a triangle, but this time we choose first an edge from $W$ in $\frac{|W|(k-6)}{2}$ ways, and the last vertex also from $W$ in $|W|-2$ ways. We have \[p_3\cdot \frac{|W|(k-6)}{2}\cdot (|W|-2)=n_{49}+2n_{62}+6n_{14}.\]
So \[n_{49}=\frac{1}{48}nk(k-2)(k-4)(k^3-16k^2+94k-216)+2n_3,\]

For $n_{50}$, we will start the construction from the vertex of degree three of the subgraph. The last vertex, possibly isolated, we choose at distance two from the first vertex such that it is also not adjacent to three other vertices at distance one from the starting vertex. Thus, we have
\[n\cdot\frac{1}{6}k(k-2)(k-4)\cdot 3(n-k-1-2(k-3)-1-(k-4))=n_{50}+2n_{28}.\]
And
\[n_{50}=\frac{1}{4}nk(k-2)(k-4)(k^2-10k+30)-4n_3,\]

For $n_{51}$, choose a triangle, to two of its vertices attach two vertices such that they are not adjacent, $(k-2)(k-3)$ ways. The final, possibly isolated, vertex of the subgraph can be chosen from $W$. We have
\[\frac{nk}{6}\cdot 3(k-2)(k-3)\cdot \frac{1}{2}(k-2)(k-4)=n_{51}+n_{23}+n_{8}.\]
Or \[n_{51}=\frac{1}{4}nk(k-2)(k-4)(k^2-9k+22)-2n_3.\]

Here, for $n_{52}$, we can find its exact value. Start the construction from the vertex of degree four, call it $v_0$. Choose a triangle, in $k/2$ ways, and two leaves, in $\frac{1}{2}(k-2)(k-4)$ ways, attached to $v_0$. From the set of vertices at distance two from $v_0$ we can choose the vertex that is not adjacent to any other already chosen vertices. Thus,
\[n_{52}=n\cdot \frac{k}{2} \cdot \frac{1}{2}(k-2)(k-4) (n-k-1-2(k-2)-2(k-5)-1)=\frac{1}{4}nk(k-2)(k-4)(n-5k+12).\]

For $n_{53}$, choose a pentagon. Notice that all five vertices of the triangles based on the sides of this pentagon are distinct. Then we can choose for the last vertex of the subgraph one of the $n-10$ remaining vertices.
We can get
\[ p_5\cdot (n-10)=n_{53}+2n_{10}+n_{29}.\]
So \[n_{53}=\frac{1}{5}nk(k-2)(k-4)(n-5k+15)-2n_3.\]

For $n_{54}$, start from the vertex of degree four, $v_0$; choose two adjacent to it triangles. From the set of vertices at distance two from $v_0$ choose the last vertex non adjacent to any other previously chosen vertices.
We have \[n_{54}=n\cdot {k/2\choose2}\cdot (n-k-1-2(k-2)-2(k-4)).\]
Or \[n_{54}=\frac{1}{16}nk(k-2)(k-4)(k-6).\]

For $n_{55}$, choose a quadrilateral, recover a triangle at one of its sides. The last vertex choose such that it is not adjacent to any vertices of the quadrilateral. Thus we have
\[p_4\cdot 4(n-8-4(k-4))=n_{55}+n_6.\]
And \[n_{55}=\frac{1}{4}nk(k-2)(k-4)(k-6)+2n_3.\]

For $n_{56}$, start the construction from the vertex of degree three, call it $v_0$. Choose a triangle and a leaf attached to $v_0$. Continue the leaf further attaching another vertex to it. The sixth vertex we choose from the set of vertices at distance two from $v_0$ with extra requirements of not being adjacent to any of the chosen vertices except the very last one. Then we have \[n\cdot \frac{k}{2}\cdot (k-2) (k-4)(n-k-2-2(k-2)-(k-4))=n_{56}+n_{25}.\]
Or \[n_{56}=\frac{1}{2}nk(k-2)(k-4)(n-5k+14)-4n_3.\]

Once more, we need the value of $n_{58}$ before looking for $n_{57}$. For that notice that we have a relation \[m_6\cdot2(k-3)=2n_{58}+2n_{35}+n_{25}.\] This is due to the fact that in five-vertex subgraph $M_6$ we can prolong its component $P_3$ path to $P_4$ by adding to either of its sides a vertex in $k-3$ ways. Thus,
\[n_{58}=\frac{1}{8}nk(k-2)(k-4)(k^3-15k^2+86k-190)+8n_3.\]   

Now for $n_{57}$, start the construction from choosing an isolated edge, in $|E(G)|$ ways. Next choose a pair of vertices from those what are not incident to the chosen one. There will be exactly $|E(G)|-3-7(k-2)-2(k-2)(k-4)$ edges to choose the pair from. Notice that by this construction we might end up with a five-vertex subgraph. Thus
\[|E(G)|{|E(G)|-3-7(k-2)-2(k-2)(k-4) \choose 2} =3m_{14}+m_6+n_{60}+2n_{13}+n_{58}+3n_{57}.\]
And \[n_{57}=nk(k-2)(k-4)(k^4-18k^3+140k^2 - 564k + 996)/192-\frac{4}{3}n_3.\]

For $n_{59}$, start the construction from the vertex of degree three, $v_0$. Choose three other vertices from its neighborhood. Lastly, choose an edge from the set of vertices at distance two from $v_0$ such that it is not a base for any triangle on previously chosen vertices. We get \[n\cdot\frac{1}{6}k(k-2)(k-4)\cdot(|E(G)|-\frac{3}{2}k-k(k-2)-3(\frac{k}{2}-1))=n_{59}+n_{34}+n_{29}+2n_{28}+2n_{10}.\]
Or \[n_{59}=\frac{1}{24}nk(k-2)(k-4)(k-6)(k^2-10k+34)+2n_3.\]

For $n_{61}$, consider five-vertex subgraph $M_3$. Notice that two isolated vertices should have exactly two common neighbors due to the topology of the graph. Then
\[2m_3=2n_{61}+2n_{32}+n_{34}+n_{20}+n_{26}.\]
From which \[n_{61}=\frac{1}{16}nk(k-2)(k-4)(k^3-16k^2 +96k -220)+5n_3.\]

The rest of the values $n_{36}-n_{43}$ can be found from the equations directly. But we will stick to constructions wherever it is possible so we can use the equations as the way of check. It will also decrease the calculations tremendously. Again we have to go from top to bottom.

For $n_{43}$, choose a triangle. From the set $W$ of vertices at distance two from all the vertices of the triangle, choose three vertices. We have
\[p_3{W\choose 3}=n_{43}+n_{62}+n_{49}+2n_{14}.\]
And \[n_{43}=\frac{1}{288}nk(k-2)(k-4)(k^4-18k^3+130k^2-460k+720)-\frac{2}{3}n_3.\]

For $n_{42}$, consider five-vertex subgrpaph $M_3$. The component $P_3$ in it we can continue in $k-3$ was in either direction. By that we have
\[m_3\cdot 2(k-3)=2n_{42}+2n_{48}+n_{34}.\]
Thus \[n_{42}=\frac{1}{16}nk(k-2)(k-4)(k^4-17k^3+120k^2-430k+684)-10n_3.\]

For $n_{41}$, consider again $M_3$. Two isolated vertices of the subgraph has exactly two common neighbors so there exactly $2(k-2)$ vertices that are neighbors to only one of them. Thus, we obtain a relationship
\[m_3\cdot 2(k-2)=2n_{41}+2n_{48}+n_{46}+2n_{51}+n_{50}.\]
Or
\[n_{41}=\frac{1}{16}nk(k-2)(k-4)(k^4-18k^3+136k^2-524k+892)-14n_3.\]

For $n_{40}$, we start the construction from the vertex $v_0$ of degree three. Then choose three mutually non-adjacent its neighbors. Lastly, choose the pair of vertices from the set of vertices at distance two from $v_0$ such that none is adjacent to any previously chosen vertices. We have
\[n\cdot \frac{k(k-2)(k-4)}{6}\cdot {n-k-1-3-3(k-4) \choose 2}=n_{40}+n_{54}.\]
Or \[n_{40}=\frac{1}{48}nk(k-2)(k-4)(k^4-18k^3+130k^2-460k+696)-2n_3.\]

For $n_{39}$, consider five-vertex subgraph $M_4$. There are exactly $n-k-5$ vertices of the graph $G$ excluding the ones that are already in $M_4$ that are not adjacent to the only isolated vertex of the subgraph. Thus we have
\[m_4\cdot (n-k-5)=n_{54}+n_{56}+3n_{49}+n_{48}+2n_{41}+2n_{39}.\]
Or \[n_{39}=\frac{1}{128}nk(k-2)(k-4)(k^5 - 20k^4 + 176k^3 - 884k^2 + 2588k - 3624)+6n_3.\]

For $n_{38}$, we start the construction from the vertex $v_0$ of degree two, then choose its two neighbors. We have obtained $P_3$. Lastly, we choose three vertices from the set of all vertices of $G$ that are not adjacent to any of the vertices of $P_3$. We have
\[ n \cdot \frac{k(k-2)}{2}\cdot {n-3k+4\choose 3}=n_{38}+n_{41}+2n_{61}+n_{62}.\]
Or
\[n_{38}=\frac{1}{96}nk(k-2)(k-4)(k^5-20k^4+172k^3-828k^2+2300k-3048)+6n_3.\]

For $n_{37}$, consider five-vertex subgraph $M_1$. To one of its five isolated vertices add its neighboring vertex. We have
\[m_1\cdot 5k=2n_{37}+2n_{38}+3n_{40}+4n_{44}+5n_{30}.\]
And \[n_{37}=\frac{1}{768}nk(k-2)(k-4) (k^6 - 22k^5 + 212k^4 - 1208k^3 + 4484k^2 - 10456k + 12288)-3n_3.\]

And finally, for $n_{36}$ we use the equation
\[m_1(n-5)=n_{30}+6n_{36}+2n_{37}+n_{38}+n_{40}+n_{44}.\]
So \[n_{36}=\frac{1}{23040}nk(k-2)(k-4)(k^7-24k^6+248k^5-1520k^4+6436k^3-19520k^2+38896k-40704)+\frac{1}{3}n_3.\]

Thus we have obtained all the formulas for the number of subgraphs of order six in $G$. Below is the summary of all our calculations of $n_i$-s.

\begingroup
\allowdisplaybreaks
 \begin{flalign*}
 n_1=&\frac{1}{12}nk(k-2)-\frac{n_3}{3},&\\
 n_2=&\frac{1}{2}nk(k-2),\\
 n_3=&n_3,\\
 n_4=&2n_3,\\
 n_5=&\frac{1}{8}nk(k-2)(k-4)-n_3,\\
 n_6=&\frac{1}{2}nk(k-2)(k-3)-2n_3,\\
 n_7=&\frac{1}{4}nk(k-2)(k-4),\\
 n_8=&nk(k-2)(k-4)-2n_3,\\
 n_9=&\frac{1}{4}nk(k-2)(k-4)-n_3,\\
 n_{10}=&\frac{1}{2}nk(k-2)(k-4)-2n_3,\\
 n_{11}=&\frac{1}{2}nk(k-2)(k-4)(k-6)+4n_3,\\
n_{12}=&\frac{1}{12}nk(k-2)(2k^2-21k+53)+n_3,\\
n_{13}=&\frac{1}{32}nk(k-2)(k-4)(k^2-12k+42)-n_3,\\
n_{14}=&\frac{1}{144}nk(k-2)(k-4)(k-12)+\frac{n_3}{3},\\
n_{15}=&\frac{1}{8}nk(k-2)(k-4),\\
n_{16}=&\frac{1}{2}nk(k-2)(k-4),\\
n_{17}=&nk(k-2)(k-4),\\
n_{18}=&nk(k-2)(k-4)-4n_3,\\
n_{19}=&\frac{1}{12}nk(k-2)(k-4)(k-6),\\
n_{20}=&\frac{1}{2}nk(k-2)(k-4)^2,\\
n_{21}=&\frac{1}{6}nk(k-2)(k-3)(k-4)+\frac{2n_3}{3},\\
n_{22}=&\frac{1}{2}nk(k-2)(k-4)(k-5),\\
n_{23}=&nk(k-2)(k-4)(k-5)+4n_3,\\
n_{24}=&\frac{1}{4}nk(k-2)(k-4)(k-6)+2n_3,\\
n_{25}=&\frac{1}{2}nk(k-2)(k-4)(k-7)+4n_3,\\
n_{26}=&\frac{1}{4}nk(k-2)(k-4)(k-6),\\
n_{27}=&\frac{1}{2}nk(k-2)(k-4)(k-5)+2n_3,\\
n_{28}=&\frac{1}{4}nk(k-2)(k-4)(k-6)+2n_3,\\
n_{29}=&nk(k-2)(k-4)(k-6)+6n_3,\\
n_{30}=&\frac{1}{120}nk(k-2)(k-4)(k-6)(k-8),\\
n_{31}=&\frac{1}{6}nk(k-2)(k-4)(k-5)(k-6),\\
n_{32}=&\frac{1}{8}nk(k-2)(k-4)(k^2-10k+26)-n_3,\\
n_{33}=&\frac{1}{2}nk(k-2)(k-4)(k^2-10k+28)-6n_3,\\
n_{34}=&\frac{1}{2}nk(k-2)(k-4)(k^2-11k+34)-8n_3,\\
n_{35}=&\frac{1}{2}nk(k-2)(k-4)(k^2-11k+36)-10n_3,\\
n_{36}=&\frac{1}{23040}nk(k-2)(k-4)(k^7-24k^6+248k^5-1520k^4+6436k^3-19520k^2\\
&+38896k-40704)+\frac{1}{3}n_3,\\
n_{37}=&\frac{1}{768}nk(k-2)(k-4) (k^6 - 22k^5 + 212k^4 - 1208k^3 + 4484k^2 - 10456k + 12288)-3n_3,\\
n_{38}=&\frac{1}{96}nk(k-2)(k-4)(k^5-20k^4+172k^3-828k^2+2300k-3048)+6n_3,\\
n_{39}=&\frac{1}{128}nk(k-2)(k-4)(k^5 - 20k^4 + 176k^3 - 884k^2 + 2588k - 3624)+6n_3,\\
n_{40}=&\frac{1}{48}nk(k-2)(k-4)(k^4-18k^3+130k^2-460k+696)-2n_3,\\
n_{41}=&\frac{1}{16}nk(k-2)(k-4)(k^4-18k^3+136k^2-524k+892)-14n_3,\\
n_{42}=&\frac{1}{16}nk(k-2)(k-4)(k^4-17k^3+120k^2-430k+684)-10n_3,\\
n_{43}=&\frac{1}{288}nk(k-2)(k-4)(k^4-18k^3+130k^2-460k+720)-\frac{2}{3}n_3,\\
n_{44}=&\frac{1}{24}nk(k-2)(k-4)(k-6)(n-5k+13),\\
n_{45}=&\frac{1}{64}nk(k-2)(k-4)(k-6)(k^2-8k+26)+n_3,\\
n_{46}=&\frac{1}{4}nk(k-2)(k-4)(k^3-14k^2+72k-140)+8n_3,\\
n_{47}=&\frac{1}{16}nk(k-2)(k-4)(k-6)(k^2-8k+22)+2n_3,\\
n_{48}=&\frac{1}{4}nk(k-2)(k-4)(k^3-14k^2+75k-160)+14n_3,\\
n_{49}=&\frac{1}{48}nk(k-2)(k-4)(k^3-16k^2+94k-216)+2n_3,\\
n_{50}=&\frac{1}{4}nk(k-2)(k-4)(k^2-10k+30)-4n_3,\\
n_{51}=&\frac{1}{4}nk(k-2)(k-4)(k^2-9k+22)-2n_3,\\
n_{52}=&\frac{1}{4}nk(k-2)(k-4)(n-5k+12),\\
n_{53}=&\frac{1}{5}nk(k-2)(k-4)(n-5k+15)-2n_3,\\
n_{54}=&\frac{1}{16}nk(k-2)(k-4)(k-6),\\
n_{55}=&\frac{1}{4}nk(k-2)(k-4)(k-6)+2n_3,\\
n_{56}=&\frac{1}{4}nk(k-2)(k-4)(k^2-10k+30)-4n_3,\\
n_{57}=&nk(k-2)(k-4)(k^4-18k^3+140k^2 - 564k + 996)/192-\frac{4}{3}n_3,\\
n_{58}=&\frac{1}{8}nk(k-2)(k-4)(k^3-15k^2+86k-190)+8n_3,\\
n_{59}=&\frac{1}{24}nk(k-2)(k-4)(k-6)(k^2-10k+34)+2n_3,\\
n_{60}=&\frac{1}{8}nk(k-2)(k-4)(k^2-12k+38)-2n_3,\\
n_{61}=&\frac{1}{16}nk(k-2)(k-4)(k^3-16k^2 +96k -220)+5n_3,\\
n_{62}=&\frac{1}{24}nk(k-2)(k-4)(k^2-14k+54)-2n_3.\\
\end{flalign*}
\endgroup

First of all notice that the values for $n_3$ are not given. The arguments of symmetry tell us that it should be equal zero, but then it would follow immediately that $srg(99,14,1,2)$, an infamous strongly regular of Conway, doesn't exist \cite{Makhnev}. All other values for other six-vertex subgraphs are given in terms of $n,k$ and $n_3$. Of course, it is possible using the relation between $n$ and $k$ to get rid of $n$ altogether but then the formulas become very cumbersome.

\section{Four- and Five-Vertex Subgraphs}

There are only nine subgraphs of order four in $G$. They are given in Figure \ref{allFoursFigure}. Denote them $L_i$ and their quantities $l_i (i=\overline{1,9})$. It is a trivial exercise to find the number of each subgraph of order four. They are as follows:

\begin{figure}
	\includegraphics[width=0.7\textwidth]{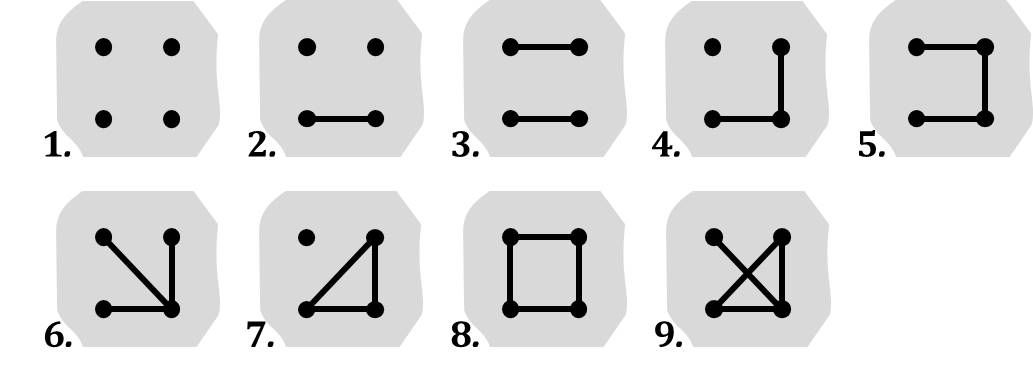}
		\centering
		\caption{All possible subgraphs of order four in $srg(n,k,1,2)$.}
		\label{allFoursFigure}
\end{figure}

\begin{align*}
l_1=&\frac{1}{192}nk(k-2)(k-4)(k^3-6k^2+10k-12),\\
l_2=&\frac{1}{16}nk(k-2)(k-4)(k^2-4k+6),\\
l_3=&\frac{1}{16}nk(k-2)(k^2-6k+10),\\
l_4=&\frac{1}{2}nk(k-2)(n-3k+4),\\
l_5=&\frac{1}{2}nk(k-2)(k-3),\\
l_6=&\frac{1}{6}nk(k-2)(k-4),\\
l_7=&\frac{1}{12}nk(k-2)(k-4),\\
l_8=&\frac{1}{8}nk(k-2),\\
l_9=&\frac{1}{2}nk(k-2).
\end{align*}
Based on these values, we can find the values for the five-vertex subgraphs. For that, we can use the following equations bounding the values for four-vertex subgraphs with five-vertex ones. To obtain these equations we need to consider all the possible graphs that can be obtained by adding to a given four-vertex subgraph exactly one vertex out of $n-4$ remaining in $G$. On the other hand, we can obtain the same quantities by subtracting a vertex from a given five-vertex subgraph. It is a tedious but straight-forward exercise.
\begin{align*}
l_1(n-4)=&5m_1+2m_2+m_3+m_5+m_9,\\
l_2(n-4)=&3m_2+2m_3+4m_4+m_6+2m_7+3m_8+m_{11}+m_{12}+m_{17},\\
l_3(n-4)=&m_4+2m_6+m_{13}+3m_{14}+m_{19}+m_{21},\\
l_4(n-4)=&2m_3+3m_5+2m_6+2m_7+4m_{10}+m_{11}+2m_{12}+2m_{13}+m_{15}+2m_{16},\\
l_5(n-4)=&m_7+2m_{11}+2m_{13}+2m_{15}+m_{16}+5m_{18}+2m_{20}+2m_{21},\\
l_6(n-4)=&m_5+4m_9+m_{11}+m_{15}+2m_{17},\\
l_7(n-4)=&2m_8+m_{12}+2m_{14}+m_{21},\\
l_8(n-4)=&m_{10}+m_{15}+m_{20},\\
l_9(n-4)=&m_{12}+2m_{16}+2m_{17}+4m_{19}+2m_{20}+m_{21}.
\end{align*}

By $M_i$ here we denote the subgraph of type $i$ from the Figure \ref{allFivesFigure} and $m_i$ - the number of such subgraphs in $G$. Notice, we can also use these equations to check the values for $m_i$-s. They are given below. Figure \ref{allFivesFigure} depicts their configurations.

\begingroup
\allowdisplaybreaks
 \begin{flalign*}
 m_1=&\frac{1}{960}nk(k-2)(k-4)(n-4k+6)(k^3-6k^2+14k-36),&\\
 m_2=&\frac{1}{96}nk(k-2)(k-4)^2 (k^3-8k^2+26k-48),\\
 m_3=&\frac{1}{16}nk(k-2)(k-4)(k^3-10k^2+38k-60),\\
 m_4=&\frac{1}{32}nk(k-2)(k-4)(k^3-10k^2+40k-68),\\
 m_5=&\frac{1}{6}nk(k-2)(k-4)(n-4k+8),\\
 m_6=&\frac{1}{8}nk(k-2)(k-4) (k^2-8k+20),\\
 m_7=&\frac{1}{4}nk(k-2)(k-4)(k^2-7k+16),\\
 m_8=&\frac{1}{24}nk(k-2)(k-4)(n-4k+8),\\
 m_9=&\frac{1}{24}nk(k-2)(k-4)(k-6),\\
 m_{10}=&\frac{1}{8}nk(k-2)(n-4k+8),\\
 m_{11}=&\frac{1}{2}nk(k-2)(k-4)^2,\\
 m_{12}=&\frac{1}{4}nk(k-2)(k-4)^2,\\
 m_{13}=&\frac{1}{2}nk(k-2)(k^2-8k+17),\\
 m_{14}=&\frac{1}{24}nk(k-2)(k-4)(k-6),\\
 m_{15}=&\frac{1}{2}nk(k-2)(k-4),\\
 m_{16}=&\frac{1}{2}nk(k-2)(k-3),\\
 m_{17}=&\frac{1}{4}nk(k-2)(k-4),\\
 m_{18}=&\frac{1}{5}nk(k-2)(k-4),\\
 m_{19}=&\frac{1}{8}nk(k-2),\\
 m_{20}=&\frac{1}{2}nk(k-2),\\
 m_{21}=&\frac{1}{2}nk(k-2)(k-4).\\
\end{flalign*}
\endgroup

\begin{figure}
	\includegraphics[width=1.0\textwidth]{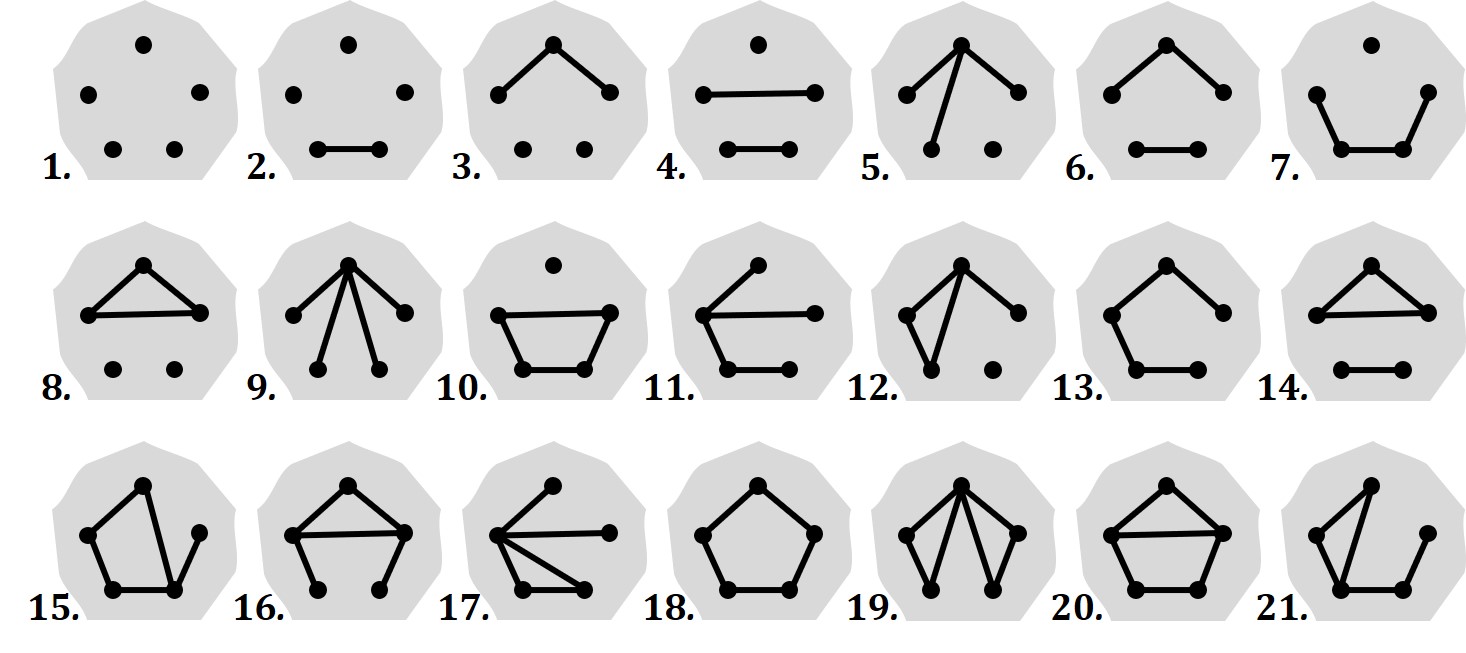}
		\centering
		\caption{All possible subgraphs of order five in $srg(n,k,1,2)$.}
		\label{allFivesFigure}
\end{figure}

Similarly, we can derive the equations bounding the values for $n_i$-s with the values for  $m_i$-s. In fact we can do it for subgraphs of any orders given that we studied all the possible configurations of them.
\begingroup
\allowdisplaybreaks
 \begin{flalign*}
m_1(n-5)=&n_{30}+6n_{36}+2n_{37}+n_{38}+n_{40}+n_{44},&\\
m_2(n-5)=&n_{19}+n_{31}+4n_{37}+2n_{38}+4n_{39}+n_{41}+2n_{42}+3n_{43}+n_{46}+n_{47}+n_{52}+n_{59},\\
m_3(n-5)=&n_{20}+n_{26}+2n_{32}+n_{34}+3n_{38}+3n_{40}+ 2n_{41}+ 2n_{42}+ 4n_{45} +n_{46}+2n_{47}+2n_{48}\\
&+n_{50}+2n_{51}+2n_{61},\\
m_4(n-5)=&n_{15}+n_{22}+n_{33}+2n_{39}+2n_{41}+n_{48}+3n_{49}+n_{54}+n_{56}+6n_{57}+2n_{58}+n_{60},\\
m_5(n-5)=&n_{20}+2n_{28}+n_{31}+n_{34}+2n_{40}+4n_{44}+n_{46}+n_{50}+2n_{52}+2n_{59},\\
m_6(n-5)=&n_{7}+n_{11}+4n_{13}+n_{16}+n_{23}+n_{24}+n_{25}+n_{34}+ 2n_{35}+ n_{41}+2n_{58}+3n_{59}\\
&+2n_{60}+4n_{61}+3n_{62},\\
m_7(n-5)=&n_{17}+3n_{21}+2n_{27}+n_{29}+2n_{33}+2n_{35}+2n_{42} +2n_{46}+2n_{48}+2n_{50}+n_{51}+5n_{53}\\
&+2n_{55}+2n_{56}+2n_{58},\\
m_8(n-5)=&n_{24}+3n_{43}+n_{47}+2n_{49}+n_{56}+n_{62},\\
m_9(n-5)=&2n_{19}+n_{26}+5n_{30}+n_{31}+n_{44},\\
m_{10}(n-5)=&n_{6}+n_{11}+2n_{13}+2n_{45}+n_{50}+n_{55},\\
m_{11}(n-5)=&2n_{10}+n_{11}+n_{17}+n_{18}+n_{20}+2n_{22}+ 2n_{24}+ 2n_{26}+ 2n_{27}+ 2n_{29}+3n_{31}+4n_{32}\\
&+2n_{33}+n_{34}+n_{46},\\
m_{12}(n-5)=&n_{16}+n_{18}+n_{22}+n_{23}+n_{25}+2n_{47}+ 2n_{51}+ 2n_{52}+ 4n_{54}+ 2n_{55}+ n_{56}+ 2n_{60},\\
m_{13}(n-5)=&n_{2}+2n_{6}+2n_{8}+2n_{9}+2n_{11}+6n_{12}+n_{18}+n_{23}+ 2n_{25}+ 2n_{28}+ 2n_{29}+ n_{33}\\
&+2n_{34}+2n_{35}+n_{48},\\
m_{14}(n-5)=&2n_{5}+6n_{14}+n_{25}+n_{49}+n_{60}+2n_{62},\\
m_{15}(n-5)=&2n_{4}+2n_{7}+4n_{9}+2n_{10}+n_{11}+n_{17}+n_{18}+2n_{26}+ 2n_{27}+ 2n_{28}+ n_{50},\\
m_{16}(n-5)=&2n_{2}+n_{4}+2n_{6}+n_{8}+2n_{16} +n_{17} +2n_{20} +3n_{21}+n_{23} +n_{51},\\
m_{17}(n-5)=&n_{7}+4n_{15}+n_{17}+3n_{19}+n_{20}+n_{22}+n_{52},\\
m_{18}(n-5)=&n_{4}+n_{8}+2n_{10}+n_{29}+n_{53},\\
m_{19}(n-5)=&n_{2}+n_{15}+n_{16}+n_{54},\\
m_{20}(n-5)=&6n_{1}+2n_{2}+2n_{3}+2n_{4}+n_{6}+n_{17}+n_{18}+n_{55},\\
m_{21}(n-5)=&4n_{3}+4n_{5}+2n_{7}+2n_{8}+n_{16}+n_{18}+n_{22}+n_{23}+2n_{24}+n_{25}+n_{56}.\\
\end{flalign*}
\endgroup

\section{Conclusion}

In this paper we have studied the structure of a class of strongly regular graphs with parameters $\lambda=1$ and $\mu=2$. In particular we have found the subgraphs of order up to six: all their configurations and their number if the graph with given parameters do exist. All the subgraphs of order up to five have exact values that depend only on parameters $n$ and $k$, which are themselves interdependent. In fact, we can say that they depend only on valency of $G$. While starting from subgraphs of order six it is required one additional parameter $n_3$ in order to define the number of each subgraph in $G$. Well, at least we were not able to avoid it. All the arguments of symmetry tell that $n_3$ must be equal zero. But that would immediately mean, as shown by Makhnev \cite{Makhnev}, that an $srg(99,14,1,2)$ doesn't exist, while the rest of the graphs from the family must be of more coerce structure, namely consist of Paley-9 as building blocks.

Some preliminary research done on subgraphs of order seven tells us that their numbers depend on two parameters one of which can be chosen the same $n_3$. This is clearly a direction to work on although no conclusions might be guaranteed.


\end{document}